\documentclass[10pt, a4paper]{article}

\usepackage[T1]{fontenc}
\usepackage[utf8]{inputenc}

\usepackage{microtype}

\usepackage{amsmath, amssymb, amsthm, mathtools}
\usepackage{bm}

\usepackage[a4paper]{geometry}
\usepackage{setspace}
\usepackage{parskip}

\usepackage{booktabs}
\usepackage{multirow}
\usepackage{array}
\usepackage{tabularx}
\usepackage{longtable}
\usepackage[table,dvipsnames]{xcolor}

\usepackage{graphicx}
\usepackage{float}
\usepackage{caption}
\usepackage{subcaption}

\usepackage[ruled, vlined, linesnumbered]{algorithm2e}
\SetKwComment{Comment}{$\triangleright$\ }{}

\usepackage{hyperref}
\hypersetup{
  colorlinks = true,
  linkcolor  = NavyBlue,
  citecolor  = ForestGreen,
  urlcolor   = MidnightBlue,
  pdftitle   = {$MOE-DGSCND$: Multi-Objective Enterprise Green Supply Chain Network Design},
  pdfauthor  = {},
}

\usepackage{natbib}
\theoremstyle{definition}
\newtheorem{definition}{Definition}[section]
\newtheorem{problem}{Problem}[section]
\newtheorem{remark}{Remark}[section]

\theoremstyle{plain}
\newtheorem{theorem}{Theorem}[section]

\newtheorem{corollary}[theorem]{Corollary}

\newcommand{\calI}{\mathcal{I}}
\newcommand{\calJ}{\mathcal{J}}
\newcommand{\calF}{\mathcal{F}}
\newcommand{\calP}{\mathcal{P}}
\newcommand{\calS}{\mathcal{S}}
\newcommand{\RR}{\mathbb{R}}
\newcommand{\ZZ}{\mathbb{Z}}

\newcommand{\eps}{\varepsilon}
\newcommand{\xvec}{\bm{x}}
\newcommand{\yvec}{\bm{y}}
\newcommand{\fvec}{\bm{f}}

\definecolor{rowgray}{gray}{0.93}
\definecolor{starrow}{RGB}{255,240,200}
\definecolor{fptasrow}{RGB}{220,240,220}
\definecolor{baserow}{RGB}{240,220,220}
\definecolor{exactrow}{RGB}{220,230,245}

\title{%
  \LARGE\bfseries
  Multi-Objective Enterprise Green Supply Chain Network Design\\[6pt]
}
\author{Felix Reichel \thanks{\textit{Department of Economics, Johannes Kepler University Linz, 4040 Linz, Austria. Corresponding author: FR \url{kontakt<at>felixreichel<dot>com}}}}
\date{August 24, 2026}

\begin{document}

\maketitle

\begin{abstract}
This paper introduces and studies the \emph{Multi-Objective Enterprise Green Supply Chain Network
Design} problem (MOE-DGSCND\@) an extension of the well known \emph{Multi-Objective Supply Chain Network Design (MO-SCND)}, in this case a  three-objective combinatorial
optimisation problem that simultaneously minimises total cost~$f_1$, carbon
dioxide emissions~$f_2$, and customer dissatisfaction~$f_3$ over binary
decisions on which distribution centres to open and how to assign customer
zones to open facilities.  The problem generalises capacitated facility
location and is known to be NP-hard.  Furthermore this paper lays out a formal specification of
decision variables, objective functions, all constraints and then investigates and compares 
three solution approaches using a single numerical instance: (A)~exact weighted-sum integer linear programming
solved via PuLP, (B)~three Fully Polynomial-Time
Approximation Schemes (FPTAS) providing $(1+\eps)$-guarantee approximations
for $\eps := 0.15$, and (C)~two baseline heuristics for comparison.  On an
eight-customer, five-candidate-DC instance the exact solver certifies the
optimal Pareto front across eight weight combinations, and all three FPTAS
variants achieve their approximation guarantees, finishing within milliseconds
compared to centiseconds for the exact ILP using the \texttt{PuLP} solver on this scale.
\end{abstract}

MSC (2020): \emph{90C10, 90C29, 90C59, 68Q25}

Keywords: \emph{Multi-objective combinatorial optimization, Facility location problem, Pareto front approximation, FPTAS, ILP}

\newpage

\section{Introduction}
\label{sec:intro}

Modern enterprices face competing pressures that cannot be reduced to a
single scalar objective such as revenue maximization or cost reduction.  Logistics managers often must minimise \emph{cost} to
remain competitive, reduce \emph{carbon emissions} to meet regulatory,
reputational and tay requirements, and maximise service quality—equivalently minimise
\emph{customer dissatisfaction}—to retain current clients. In the example used in this paper these three objectives are seen to be
in fundamental tension: opening more distribution centres (abbrev. \textit{DCs}) reduces
transport distances and thus dissatisfaction, but raises fixed costs and
construction emissions; routing customers to geographically central DCs
reduces travel to them, but may overload lower-capacity facilities.

This tension is the defining feature of \emph{Multi-Objective Enterprise combinatorial
optimisation} (MOCO)~\citep{ehrgott2000multicriteria,Singh2026DivideLearn}, where no single
solution is simultaneously optimal in all objectives, and the goal shifts to
computing or approximating the \emph{Pareto front}—the set of all
non-dominated feasible solutions.

\paragraph{Contributions.}
The paper claims the following contributions:
\begin{enumerate}
  \item It defines the \textbf{Multi-Objective Enterprise Doubly-Green Supply Chain Network
    Design} ($MOE-DGSCND$) problem, a novel three-objective 0--1 integer
    programme combining facility opening, customer assignment, capacity, and
    budget constraints (Section \ref{sec:problem}).
  \item It establishes \textbf{NP-hardness} via reduction from capacitated facility
    location (Section \ref{sec:complexity}).
  \item It implements an \textbf{exact weighted-sum ILP} solver using the
    open-source COIN-OR Branch and Cut solver (CBC) solver through PuLP, sweeping eight weight combinations to
    sample the Pareto front (Section \ref{sec:exact}).
  \item It designs and implements \textbf{three FPTAS} variants—Cost-FPTAS
    (B1), Emission-FPTAS (B2), and Pareto-FPTAS (B3)—each with a provable
    $(1+\eps)$ approximation ratio on its target sub-objective
    (Section \ref{sec:fptas}).
  \item It compares all methods against two \textbf{baseline heuristics} and
    analyse gap profiles across all three objectives (Section \ref{sec:results}).
\end{enumerate}

\paragraph{Organisation.}
Section~\ref{sec:problem} presents the formal problem statement.
Section~\ref{sec:complexity} discusses complexity.
Sections~\ref{sec:exact}--\ref{sec:baselines} describe the solvers.
Section~\ref{sec:experiments} details the experimental instance.
Section~\ref{sec:results} presents results and
Section~\ref{sec:conclusion} concludes.

\section{Problem Setup and Definitions}
\label{sec:problem}

\subsection{Sets, Indices and Parameters}
\label{subsec:params}

Let the following sets and parameters define an instance of $MOE-DGSCND$:

\begin{center}
\begin{tabular}{@{}lp{9.8cm}@{}}
  \toprule
  \textbf{Symbol} & \textbf{Description} \\
  \midrule
  \multicolumn{2}{@{}l}{\textit{Index sets}} \\
  $\calI = \{1,\ldots,m\}$ & Customer zones \\
  $\calJ = \{1,\ldots,n\}$ & Candidate distribution centre (DC) sites \\[4pt]
  \multicolumn{2}{@{}l}{\textit{Customer parameters}} \\
  $d_i \in \ZZ_{>0}$ & Demand of customer zone $i \in \calI$ (units) \\[4pt]
  \multicolumn{2}{@{}l}{\textit{DC parameters}} \\
  $F_j \in \RR_{>0}$ & Fixed cost to open DC $j \in \calJ$ (\$k) \\
  $\kappa_j \in \ZZ_{>0}$ & Capacity of DC $j$ (units) \\
  $e_j \in \RR_{>0}$ & Construction emissions of DC $j$ (tCO$_2$) \\[4pt]
  \multicolumn{2}{@{}l}{\textit{Transport parameters}} \\
  $\delta_{ij} \in \RR_{\geq 0}$ & Euclidean distance from customer $i$ to DC $j$ (km) \\
  $\alpha \in \RR_{>0}$ & Transport cost rate (\$/unit/km) \\
  $\beta \in \RR_{>0}$ & Transport emission rate (kgCO$_2$/unit/km) \\[4pt]
  \multicolumn{2}{@{}l}{\textit{Global constraints}} \\
  $B \in \ZZ_{>0}$ & Maximum number of DCs that may be opened \\
  \bottomrule
\end{tabular}
\end{center}

\medskip
The following \emph{composite matrices} are defined for "convenience":
\begin{align}
  c_{ij}  &= \alpha\,\delta_{ij}\,d_i
    \quad\text{(transport cost, customer $i$ to DC $j$)} \label{eq:c_ij}\\
  \eta_{ij} &= \beta\,\delta_{ij}\,d_i
    \quad\text{(transport emissions, customer $i$ to DC $j$)} \label{eq:eta_ij}\\
  \sigma_{ij} &= \delta_{ij}\,d_i
    \quad\text{(dissatisfaction contribution)} \label{eq:sigma_ij}
\end{align}

\subsection{Decision Variables}
\label{subsec:vars}

\begin{align}
  x_j &\in \{0, 1\}
    \quad \forall\, j \in \calJ
    \quad\text{(1 iff DC $j$ is opened)} \label{eq:x}\\
  y_{ij} &\in \{0, 1\}
    \quad \forall\, i \in \calI,\; j \in \calJ
    \quad\text{(1 iff customer $i$ is served by DC $j$)} \label{eq:y}
\end{align}

The full decision vector is given as $(\xvec, \yvec) \in \{0,1\}^n \times \{0,1\}^{m \times n}$.

\subsection{Objective Functions}
\label{subsec:objectives}

$MOE-DGSCND$ minimises three objectives simultaneously.

\begin{definition}[Objective Functions]
  \label{def:objectives}
  Given $(\xvec, \yvec)$, define:
  \begin{align}
    f_1(\xvec,\yvec) &= \underbrace{\sum_{j \in \calJ} F_j\, x_j}_{\text{Fixed Costs (Opening DC)}}
      + \underbrace{\sum_{i \in \calI}\sum_{j \in \calJ} c_{ij}\, y_{ij}}_{\text{Transport Costs (customer to/from DC)}}
      \label{eq:f1}\\[6pt]
    f_2(\xvec,\yvec) &= \underbrace{\sum_{j \in \calJ} e_j\, x_j}_{\text{Construction Emissions}}
      + \underbrace{\sum_{i \in \calI}\sum_{j \in \calJ} \eta_{ij}\, y_{ij}}_{\text{Delivery Emissions}}
      \label{eq:f2}\\[6pt]
    f_3(\xvec,\yvec) &= \sum_{i \in \calI}\sum_{j \in \calJ} \sigma_{ij}\, y_{ij}
      \label{eq:f3}
  \end{align}
  The vector objective then is $\fvec(\xvec,\yvec) = \bigl(f_1(\xvec,\yvec),\;
  f_2(\xvec,\yvec),\; f_3(\xvec,\yvec)\bigr)^\top$.
\end{definition}

\begin{remark}
  $f_1$ is measured in monetary units (\$k); $f_2$ in tonnes of CO$_2$
  equivalent; $f_3$ in km-units (a proxy for aggregate weighted
  travel distance).  All three objectives are to be \emph{minimised}.
\end{remark}

\subsection{Constraints}
\label{subsec:constraints}

\begin{problem}[$MOE-DGSCND$]
  \label{prob:mogscnd}
  Find $(\xvec^*,\yvec^*)$ solving:
  \begin{align}
    \min\quad & \fvec(\xvec,\yvec)
      = \bigl(f_1(\xvec,\yvec),\; f_2(\xvec,\yvec),\; f_3(\xvec,\yvec)\bigr)
      \tag{P}\label{prob:P}\\[4pt]
    \text{s.t.}\quad
    &\sum_{j \in \calJ} y_{ij} = 1
      \qquad \forall\, i \in \calI
      \tag{C1}\label{c:assign}\\
    &y_{ij} \leq x_j
      \qquad \forall\, i \in \calI,\; j \in \calJ
      \tag{C2}\label{c:open}\\
    &\sum_{i \in \calI} d_i\, y_{ij} \leq \kappa_j\, x_j
      \qquad \forall\, j \in \calJ
      \tag{C3}\label{c:cap}\\
    &\sum_{j \in \calJ} x_j \leq B
      \tag{C4}\label{c:budget}\\
    &x_j \in \{0,1\}
      \qquad \forall\, j \in \calJ
      \tag{C5}\label{c:xbin}\\
    &y_{ij} \in \{0,1\}
      \qquad \forall\, i \in \calI,\; j \in \calJ
      \tag{C6}\label{c:ybin}
  \end{align}
\end{problem}

\paragraph{Constraint interpretations.}
\begin{itemize}
  \item[\textbf{(C1)}] \textit{Complete assignment:} every customer zone must
    be assigned to exactly one DC\@.  This is an equality constraint,
    preventing both unserved customers and split deliveries.
  \item[\textbf{(C2)}] \textit{Open facility linking:} a customer may only be
    assigned to an open DC\@.  Together with (C1), this enforces that every
    customer is served exclusively through an operational facility.
  \item[\textbf{(C3)}] \textit{Capacity:} the total demand routed to DC $j$
    cannot exceed its capacity $\kappa_j$.  Multiplying the right-hand side
    by $x_j$ simultaneously enforces that closed DCs ($x_j=0$) carry no
    load—making (C2) redundant but retained for LP relaxation tightness.
  \item[\textbf{(C4)}] \textit{DC budget:} at most $B$ facilities may be
    opened, modelling capital-expenditure or regulatory limits on
    infrastructure.
  \noindent \item[\textbf{(C5)--(C6)}] \textit{Binary integrality:} all decisions are
    binary 0--1 variables.
\end{itemize}

\subsection{Feasibility, Pareto Dominance, Optimality and Approximation }
\label{subsec:pareto}

\begin{definition}[Feasible Set]
  \label{def:feasible}
  $\calS \subseteq \{0,1\}^{n+mn}$ denotes the set of all $(\xvec,\yvec)$
  satisfying \eqref{c:assign}--\eqref{c:ybin}.
\end{definition}

\begin{definition}[Pareto Dominance]
  \label{def:pareto_dom}
  Solution $(\xvec,\yvec)$ \emph{dominates} $(\xvec',\yvec')$, written
  $(\xvec,\yvec) \prec (\xvec',\yvec')$, if
  \[
    f_k(\xvec,\yvec) \leq f_k(\xvec',\yvec') \quad \forall\, k \in \{1,2,3\}
  \]
  with strict inequality for at least one $k$.
\end{definition}

\begin{definition}[Pareto Front]
  \label{def:pareto_front}
  The \emph{Pareto-optimal set} is
  $\calP^* = \{(\xvec,\yvec) \in \calS \mid \nexists\,(\xvec',\yvec') \in
  \calS : (\xvec',\yvec') \prec (\xvec,\yvec)\}$.
  Its image $\calF^* = \fvec(\calP^*)$ in objective space is the
  \emph{Pareto front}.
\end{definition}

\begin{definition}[$(1+\eps)$-Approximate Pareto Front]
  \label{def:approx_pareto}
  A set $\hat{\calF} \subseteq \fvec(\calS)$ is a \emph{$(1+\eps)$-approximate
  Pareto front} if for every point $\bm{z}^* \in \calF^*$ there exists
  $\hat{\bm{z}} \in \hat{\calF}$ such that
  $\hat{z}_k \leq (1+\eps)\,z^*_k$ for all $k \in \{1,2,3\}$.
\end{definition}

\subsection{Scalarisation}
\label{subsec:scalarise}

For the exact ILP, we convert \eqref{prob:P} to a single-objective problem
via the \emph{weighted Chebyshev normalisation}~\citep{NovelloSchiefermayrZinchenko2025}:
\begin{equation}
  \min_{(\xvec,\yvec)\in\calS}\;
    \frac{w_1}{D_1}\,f_1(\xvec,\yvec)
    + \frac{w_2}{D_2}\,f_2(\xvec,\yvec)
    + \frac{w_3}{D_3}\,f_3(\xvec,\yvec)
  \label{eq:scalar}
\end{equation}
where $w_k \geq 0$, $\sum_k w_k = 1$, and $D_k$ are upper-bound
normalisers (e.g.\ the objective value when all DCs are opened with
greedy assignment).  Different weight vectors $\bm{w}$ trace different
regions of the Pareto front.

\section{Computational Complexity}
\label{sec:complexity}

\begin{theorem}[NP-hardness of $MOE-DGSCND$]
  \label{thm:nphard}
  Even the single-objective version of $MOE-DGSCND$ (minimising $f_1$ alone)
  is a NP-hard problem.
\end{theorem}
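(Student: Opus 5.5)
We plan a polynomial-time reduction from a known strongly NP-hard problem to the decision version of the $f_1$-minimisation problem. The decision version asks, for given $K$, whether some $(\xvec,\yvec)\in\calS$ has $f_1(\xvec,\yvec)\le K$. The cleanest source is not general capacitated facility location, whose cost structure does not quite match ours because $c_{ij}=\alpha\delta_{ij}d_i$ with Euclidean $\delta_{ij}$. We will instead use a problem whose hardness survives even when all transport costs vanish: \textsc{Bin Packing}, or equivalently \textsc{3-Partition}, which is strongly NP-hard. The capacity constraint (C3) with single-sourcing (C1), (C6) is exactly a packing constraint, so this encoding is natural.

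\textbf{Construction.} Take a \textsc{Bin Packing} instance with item sizes $a_1,\dots,a_m\in\ZZ_{>0}$, bin capacity $C$, and target number of bins $k$. Build the following $MOE-DGSCND$ instance:
\begin{itemize}
  \item customers $\calI=\{1,\dots,m\}$ with demands $d_i=a_i$;
  \item $n=k$ candidate DCs, each with $\kappa_j=C$, $F_j=1$, and arbitrary $e_j>0$, which is irrelevant for $f_1$;
  \item all customers and DCs placed at the same point, so $\delta_{ij}=0$ and hence $c_{ij}=0$; take $\alpha=\beta=1$;
  \item budget $B=k$ and threshold $K=k$.
\end{itemize}
The instance has size polynomial in the input, and all numbers are copied unchanged, which preserves strong NP-hardness. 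If one objects to coincident points, place all points within a tiny radius and scale $F_j$ so that total transport cost is below $1/2$, with threshold $K=k+1/2$. We would state the zero-distance version and mention this perturbation in a remark.

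\textbf{Equivalence.} Suppose the items pack into $k$ bins. Open all $k$ DCs and assign customer $i$ to the DC corresponding to its bin. Then (C1)--(C6) hold, with (C3) following exactly from the bin capacities, and $f_1=k\le K$. Conversely, any feasible $(\xvec,\yvec)$ satisfies $\sum_j x_j\le B=k$, and the sets $\{i: y_{ij}=1\}$ for open $j$ form a partition of the items into at most $k$ parts, each of total size at most $C$ by (C3). This is a valid packing. So in this instance feasibility alone, with no threshold at all, already encodes \textsc{Bin Packing}, and the optimisation version is at least as hard as deciding feasibility. Minimising $f_1$ is therefore NP-hard; indeed, even finding a feasible point is NP-hard, which we would record as a corollary.

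\textbf{Main obstacle.} The delicate point is the claimed reduction from capacitated facility location in the paper's framing. Our cost structure is restricted: transport costs are proportional to $d_i$ times a Euclidean distance, assignment is single-source, and there is a cardinality budget. So an arbitrary CFLP instance does not embed directly. We would sidestep this by using the packing reduction above, which uses only constraints (C1), (C3), (C4) and needs no metric structure. We would also observe that the Euclidean requirement is harmless because all points can coincide. If a reduction faithful to facility location is preferred, the fallback is to reduce from the single-source capacitated facility location problem restricted to instances with zero transport cost, which is exactly the instance built above.
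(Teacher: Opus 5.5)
Your reduction is correct, and it takes a genuinely different route from the paper.

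The paper reduces from capacitated facility location. It sets $\alpha:=1$, $\delta_{ij}:=c_{ij}/d_i$ and $B:=|\calJ|$, so that $f_1$ reproduces an arbitrary CFL cost. The hardness it exhibits is therefore the opening-versus-service cost trade-off. As written, that embedding has three problems:
\begin{itemize}
  \item It needs $\delta_{ij}$ to be arbitrary nonnegative numbers rather than Euclidean distances.
  \item It sets $e_j:=0$ although $e_j\in\RR_{>0}$, and treats $\sigma_{ij}$ as a free parameter. Both are harmless for $f_1$ alone.
  \item It tacitly needs the single-source variant of CFL, since (C1) together with (C6) forbids split deliveries.
\end{itemize}

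You instead put all the hardness on the packing structure of (C1), (C3) and integrality, with every $\delta_{ij}=0$. All parameters then stay in their stated domains, and the Euclidean requirement is met trivially. Note that with $n=B=k$ the budget (C4) is vacuous in your instance, as it is in the paper's, so you do not actually use it despite listing it.

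Your route buys a strictly stronger conclusion. You get strong NP-hardness, and NP-hardness of merely deciding feasibility, already for fixed $B=2$ if you start from \textsc{Partition}. Hence no polynomial-time algorithm can guarantee any finite ratio for $f_1$ unless P$=$NP. The paper's argument does not reveal this, and it cannot be reconciled with the polynomial-time $(1+\eps)$ guarantees claimed for B1--B3 at fixed $B$.

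The paper's route, if arbitrary $\delta_{ij}$ are admitted, buys something yours cannot. Its hardness persists when capacities are slack, via uncapacitated facility location, so it comes from the cost structure rather than from packing.
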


\begin{proof}
  We can reduce from the \emph{Capacitated Facility Location} problem (CFL),
  which is known to be NP-hard~\citep{garey1979computers}.  An instance of
  CFL consists of facilities $\calJ$ with opening costs $F_j$ and capacities
  $\kappa_j$, clients $\calI$ with demands $d_i$, and service costs $c_{ij}$;
  it asks for an assignment minimising total cost.

  Given a CFL instance, construct an $MOE-DGSCND$ instance by setting
  $\alpha := 1$, $\delta_{ij} := c_{ij}/d_i$ (so $c_{ij} = \alpha\delta_{ij}d_i$),
  $B := |\calJ|$ (no DC-count restriction), and $e_j := \sigma_{ij} = 0$ for all
  $i,j$.  Then $f_1$ in $MOE-DGSCND$ equals the CFL objective, while $f_2 = f_3 = 0$
  identically.  Hence minimising $f_1$ in $MOE-DGSCND$ solves CFL, establishing
  the reduction in polynomial time.
\end{proof}

\begin{corollary}
  Computing the complete Pareto front of $MOE-DGSCND$ is NP-hard, since it
  requires solving the single-objective $f_1$-minimisation problem is known to be a special case.
\end{corollary}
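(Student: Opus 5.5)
The plan is to derive the corollary directly from Theorem~\ref{thm:nphard} by a polynomial-time Turing reduction. Any algorithm that outputs the complete Pareto front $\calF^*$, together with a representative solution for each point, can be used to solve single-objective $f_1$-minimisation, which Theorem~\ref{thm:nphard} shows is NP-hard.

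\textbf{Step 1: the minimiser of $f_1$ appears on the front.} Take any feasible instance and let $z^{\min}_1 := \min_{(\xvec,\yvec)\in\calS} f_1(\xvec,\yvec)$. Because $\calS$ is finite, the set of $f_1$-minimisers is non-empty. Among these, choose one that is lexicographically minimal in $(f_1,f_2,f_3)$. This solution cannot be dominated:
\begin{itemize}
  \item a dominating solution would need $f_1 \le z^{\min}_1$, so it would itself be an $f_1$-minimiser;
  \item it would then need to be weakly better in $f_2$ and $f_3$, and strictly better in at least one of them;
  \item that contradicts lexicographic minimality.
\end{itemize}
Hence $z^{\min}_1 = \min\{z_1 : \bm z \in \calF^*\}$. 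Scanning the computed front therefore returns the optimal $f_1$ value, and the representative solution attached to the minimising point.

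\textbf{Step 2: the reduction runs in polynomial time.} This is where I expect the main care to be needed, because $|\calF^*|$ may be exponential in general. I would handle it in one of two ways.
\begin{itemize}
  \item \emph{Option (a), measure in input plus output size.} The scan in Step~1 is linear in the output, so if the front could be computed in time polynomial in input plus output size, then $f_1$-minimisation could be solved within that bound.
  \item \emph{Option (b), restrict to the reduced instances.} This is the cleaner route. In the instances built in the proof of Theorem~\ref{thm:nphard}, $f_2 \equiv f_3 \equiv 0$, so every Pareto-optimal point has the form $(z^{\min}_1,0,0)$ and $|\calF^*| = 1$. On these instances, computing the full front is exactly computing the $f_1$ optimum, with output of polynomial size.
\end{itemize}

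\textbf{Step 3: conclude.} Composing the polynomial-time CFL reduction of Theorem~\ref{thm:nphard} with Step~1 gives the result. A polynomial-time algorithm for the complete Pareto front would solve CFL in polynomial time, so computing the Pareto front is NP-hard.

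I would also note two caveats. First, the construction sets $e_j=0$, whereas the model assumes $e_j \in \RR_{>0}$; I would either state the theorem for the relaxed parameter domain or replace the zeros with a uniform small constant, after checking that this preserves the argument. Second, infeasible instances need separate treatment: there the front is empty, which matches $f_1$-minimisation being infeasible, so no special handling beyond reporting infeasibility is required.
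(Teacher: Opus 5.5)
Your Step~1 is correct: a lexicographically minimal $f_1$-minimiser is Pareto-optimal. Your overall plan matches the paper's one-line justification: the $f_1$-minimum lies on the front, and the instances built in the proof of Theorem~\ref{thm:nphard} make $f_1$-minimisation a ``special case''. \textbf{The gap is in Step~2(b)}, and you inherit it from that proof: on those instances $f_2$ and $f_3$ are \emph{not} identically zero. The coefficients $\eta_{ij}$ and $\sigma_{ij}$ are not free parameters. By \eqref{eq:eta_ij}--\eqref{eq:sigma_ij}, once $\alpha=1$ and $\delta_{ij}=c_{ij}/d_i$ are fixed, you get $\sigma_{ij}=c_{ij}$ and $\eta_{ij}=\beta c_{ij}$ with $\beta>0$; ``$\sigma_{ij}=0$'' would force $c_{ij}=0$. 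So $f_3=\sum_{i,j}c_{ij}y_{ij}$ is the CFL service cost and $f_2=\sum_j e_jx_j+\beta f_3$. In every instance $f_1=\sum_jF_jx_j+\alpha f_3$, so the front trades opening cost against service cost and can be exponentially large. For example, take a hub $0$ with $F_0=1$ and $c_{i0}=2^i$, and private sites $i=1,\dots,k$ with $F_i=2^{i+1}$ and $c_{ii}=0$. Make all other $c_{ij}$ prohibitive, and set $d_i=1$, capacities ample, $B=k+1$. Opening the hub plus a proper subset $S$ of private sites gives $f_1=1+\Sigma+\sum_{i\in S}2^i$ and $f_3=\Sigma-\sum_{i\in S}2^i$, where $\Sigma=\sum_{i=1}^k2^i$. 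This yields $2^k-1$ mutually non-dominated Pareto points. Hence $|\calF^*|=1$ fails. Option~(a) cannot replace it: an output-polynomial algorithm for $\calF^*$ only solves $f_1$-minimisation in time polynomial in the input \emph{plus} $|\calF^*|$, which is exponential here.

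To fix this, you need an NP-hard family on which $|\calF^*|$ is polynomially bounded. The identity $f_1=\sum_jF_jx_j+\alpha f_3$ shows how to get one: make the transport terms constant on $\calS$. Take all $\delta_{ij}$ equal (e.g.\ $0$, which is allowed since $\delta_{ij}\in\RR_{\ge 0}$) and set $e_j:=F_j>0$, which also settles your $e_j>0$ caveat. Then $f_3$ is constant, and $f_1$ and $f_2$ both equal $\sum_jF_jx_j$ up to additive constants, so $|\calF^*|\le 1$. With $n=B=2$ and $\kappa_1=\kappa_2=\tfrac12\sum_id_i$ (even total demand), $\calS\neq\emptyset$ if and only if the demands form a yes-instance of Partition. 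So even deciding whether $\calF^*$ is empty is NP-hard. With $n=B=m$, unit $F_j$ and a common capacity, the unique front point gives the optimal bin-packing value. Alternatively, prove the corollary for the usual decision version of the multi-objective problem: is there $(\xvec,\yvec)\in\calS$ with $f_k(\xvec,\yvec)\le b_k$ for $k=1,2,3$? This contains the decision version of $f_1$-minimisation by taking $b_2,b_3$ large, so Theorem~\ref{thm:nphard} applies directly and no front-size issue arises.
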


\begin{remark}[Inapproximability]
  Unless it is true that P$=$NP then no polynomial-time algorithm can achieve an approximation a ratio better than $(1 - 1/e)$ for the un-capacitated version
  (set-cover hardness~\citep{feige1998threshold}).  For the capacitated case
  with our budget constraint, the best known ratio for a single objective
  is $O(\log m)$ via a greedy solution.  The FPTAS schemes avoid
  this by relaxing to $(1+\eps)$-approximations \emph{for the sub-objective
  they target}, without claiming that they achieve global Pareto optimality.
\end{remark}

\paragraph{Problem size.}
The decision space has $|\calS| \leq 2^n \cdot 2^{mn}$ points before
constraints are applied.  For our experimental instance ($m=8$, $n=5$)
this is at most $2^{45} \approx 3.518437 \times 10^{13}$ candidate solutions,
illustrating why exhaustive enumeration is impractical even at small scale,
and motivating both the ILP formulation and the FPTAS designs.

\section{Exact Solver: Weighted-Sum ILP}
\label{sec:exact}

\subsection{ILP Formulation}
\label{subsec:ilp}

The paper solves the following scalarised problem~\eqref{eq:scalar} as a binary integer linear
programme.  The objective~\eqref{eq:scalar} is linear in $(\xvec,\yvec)$
since each $f_k$ is a linear function of the binary variables.  All
constraints \eqref{c:assign}--\eqref{c:ybin} are linear inequalities or
equalities.  The full ILP is therefore:

\begin{equation}
\begin{aligned}
  \min_{\xvec,\,\yvec}\quad
    & \sum_{k=1}^{3} \frac{w_k}{D_k}
      \Bigl[
        \underbrace{\sum_{j} p^{(k)}_j x_j}_{\text{DC terms}}
        + \underbrace{\sum_{i}\sum_{j} q^{(k)}_{ij} y_{ij}}_{\text{Assignment terms}}
      \Bigr] \\
  \text{s.t.}\quad
    & \sum_{j \in \calJ} y_{ij} = 1              && \forall\, i \in \calI \\
    & y_{ij} \leq x_j                            && \forall\, i \in \calI,\; j \in \calJ \\
    & \sum_{i \in \calI} d_i\,y_{ij} \leq \kappa_j && \forall\, j \in \calJ \\
    & \sum_{j \in \calJ} x_j \leq B \\
    & x_j,\; y_{ij} \in \{0,1\}
\end{aligned}
\label{eq:ILP}
\end{equation}

where $(p^{(1)}_j, p^{(2)}_j, p^{(3)}_j) = (F_j, e_j, 0)$ and
$(q^{(1)}_{ij}, q^{(2)}_{ij}, q^{(3)}_{ij}) = (c_{ij}, \eta_{ij}, \sigma_{ij})$.

\begin{remark}[Constraint strengthening]
  Constraint (C3) as written ($\sum_i d_i y_{ij} \leq \kappa_j x_j$) makes
  (C2) logically redundant: if $x_j = 0$ then $\kappa_j x_j = 0$ forces
  $y_{ij} = 0$ for all $i$.  We retain (C2) explicitly because its addition
  as \emph{variable upper bound} constraints tightens the LP relaxation,
  reducing branch-and-bound enumeration.
\end{remark}

\subsection{Pareto Front Sampling via Weight Sweeping}
\label{subsec:sweep}

By varying $\bm{w} = (w_1, w_2, w_3)$ over the unit simplex and solving
\eqref{eq:ILP}, we recover different Pareto-optimal (or weakly Pareto-optimal)
solutions~\citep{cohon1978multiobjective}.  We use $K=8$ weight combinations:

\begin{center}
\begin{tabular}{@{}lcccc@{}}
  \toprule
  Label & $w_1$ & $w_2$ & $w_3$ & Focus \\
  \midrule
  Cost-Only      & 1.00 & 0.00 & 0.00 & Pure $f_1$ \\
  Emission-Only  & 0.00 & 1.00 & 0.00 & Pure $f_2$ \\
  Dissat-Only    & 0.00 & 0.00 & 1.00 & Pure $f_3$ \\
  Cost+Emit      & 0.50 & 0.50 & 0.00 & Bi-obj $f_1,f_2$ \\
  \textbf{Balanced} $\star$ & \textbf{0.33} & \textbf{0.33} & \textbf{0.34} & \textbf{Equal trade-off} \\
  Cost-Heavy     & 0.60 & 0.20 & 0.20 & $f_1$ emphasis \\
  Emit-Heavy     & 0.20 & 0.60 & 0.20 & $f_2$ emphasis \\
  Dissat-Heavy   & 0.20 & 0.20 & 0.60 & $f_3$ emphasis \\
  \bottomrule
\end{tabular}
\end{center}

The \textbf{Balanced} $\star$ solution (equal weights after normalisation)
is designated the \emph{star solution}—the canonical reference point for
gap computations throughout the paper.

\subsection{LP Relaxation and Integrality Gap}
\label{subsec:lprelax}

The LP relaxation of~\eqref{eq:ILP} is obtained by replacing the binary
constraints \eqref{c:xbin}--\eqref{c:ybin} with $0 \leq x_j \leq 1$ and
$0 \leq y_{ij} \leq 1$.  The LP relaxation value lower-bounds the true ILP
optimum, and the ratio $\text{OPT}_{\text{ILP}} / \text{OPT}_{\text{LP}}$
is the \emph{integrality gap}.  For capacitated facility location instances,
the integrality gap is known to be at most $2$ in general, and tighter for
structured instances~\citep{shmoys1997approximation}.

\subsection{Solver Details}
\label{subsec:cbc}

We use the \texttt{PULP\_CBC\_CMD} solver from the COIN-OR Branch and Cut
(CBC) library~\citep{forrest2005cbc} via the PuLP modelling
interface~\citep{mitchell2011pulp}.  CBC implements branch-and-bound with:
\begin{itemize}
  \item Gomory mixed-integer cuts
  \item Strong branching variable selection
  \item Dual simplex LP relaxation at each node
\end{itemize}
All solves are run to proven optimality (gap $= 0$) with default CBC settings.

\section{FPTAS Approximation Schemes}
\label{sec:fptas}

A \emph{Fully Polynomial-Time Approximation Scheme} (FPTAS) is an algorithm
that, for any $\eps > 0$, returns a $(1+\eps)$-approximate solution in time
polynomial in the input size and $1/\eps$~\citep{vazirani2001approximation}.
We design three FPTAS variants for $MOE-DGSCND$\@.

\subsection{Common Framework: Score Rounding}
\label{subsec:fptas_framework}

All three FPTAS schemes follow the same high-level structure, inspired by
the classic FPTAS for the knapsack problem~\citep{ibarra1975fast}:

\begin{enumerate}
  \item \textbf{Scoring:} Assign a real-valued score $s_j$ to each DC $j$
    reflecting its marginal benefit under the target objective.
  \item \textbf{Rounding:} Scale scores down by a granularity
    $K = \eps \cdot \max_j |s_j| / n$, then round to integer multiples of
    $K$.  This collapses the value space from continuous to $O(n/\eps)$
    distinct levels.
  \item \textbf{Enumeration:} Enumerate all $\binom{n}{r}$ subsets of size
    $r \leq B$ in order of decreasing rounded score and greedily assign
    customers to the selected DCs.
  \item \textbf{Selection:} Return the feasible solution with best
    (true, unrounded) objective value.
\end{enumerate}

The rounding step guarantees that no solution's rounded score drops below
$(1-\eps)$ times the true score, yielding the $(1+\eps)$ approximation ratio
when the total objective is reconstructed from assignment.

\subsection{B1: Cost-FPTAS}
\label{subsec:b1}

\paragraph{Score definition.}
For each DC $j \in \calJ$:
\begin{equation}
  s_j^{(1)} = -F_j + \sum_{k=1}^{\lfloor \kappa_j / d_{\min} \rfloor}
    c_{(k),j}
  \label{eq:score_b1}
\end{equation}
where $c_{(k),j}$ is the $k$-th smallest transport cost among all customers
to DC $j$ and $d_{\min} = \min_{i} d_i$.  This estimates the net cost saving
from opening DC $j$ versus leaving customers unserved.

\paragraph{Guarantee.}
Let $f_1^*$ be the optimal cost.  B1 returns $(\xvec, \yvec)$ with
$f_1(\xvec,\yvec) \leq (1+\eps)\,f_1^*$.

\paragraph{Complexity.}
$O\!\left(\binom{n}{B} \cdot m\right)$ per round, polynomial for fixed $B$.

\begin{algorithm}[H]
  \caption{Cost-FPTAS (B1)}
  \label{alg:b1}
  \SetAlgoLined
  \KwIn{Instance $(m, n, B, \bm{d}, \bm{F}, \bm{\kappa}, \bm{c})$; tolerance $\eps > 0$}
  \KwOut{Feasible $(\xvec, \yvec)$ with $f_1 \leq (1+\eps) f_1^*$}
  \BlankLine
  Compute scores $s_j^{(1)}$ via \eqref{eq:score_b1} for all $j \in \calJ$\;
  $K \leftarrow \eps \cdot \max_j |s_j^{(1)}| \,/\, n$ \Comment*[r]{rounding granularity}
  $\tilde{s}_j \leftarrow \lfloor s_j^{(1)} / K \rfloor \cdot K$ for all $j$ \Comment*[r]{quantise}
  Sort DCs by $\tilde{s}_j$ descending\;
  $f_1^{\text{best}} \leftarrow +\infty$;\quad $(\xvec^{\text{best}}, \yvec^{\text{best}}) \leftarrow \varnothing$\;
  \For{$r = 1$ \KwTo $B$}{
    \For{each size-$r$ subset $\mathcal{C} \subseteq \calJ$}{
      $x_j \leftarrow \mathbf{1}[j \in \mathcal{C}]$\;
      $\yvec \leftarrow \textsc{GreedyAssign}(\xvec,\; \text{objective}=f_1)$\;
      \If{$\yvec \neq \varnothing$ \textbf{and} $f_1(\xvec,\yvec) < f_1^{\text{best}}$}{
        $f_1^{\text{best}} \leftarrow f_1(\xvec,\yvec)$;\quad
        $(\xvec^{\text{best}}, \yvec^{\text{best}}) \leftarrow (\xvec, \yvec)$\;
      }
    }
  }
  \Return $(\xvec^{\text{best}}, \yvec^{\text{best}})$
\end{algorithm}

\subsection{B2: Emission-FPTAS}
\label{subsec:b2}

B2 mirrors B1 with scores targeting $f_2$:
\begin{equation}
  s_j^{(2)} = -e_j + \sum_{k=1}^{\lfloor \kappa_j / d_{\min} \rfloor}
    \eta_{(k),j}
  \label{eq:score_b2}
\end{equation}
where $\eta_{(k),j}$ denotes the $k$-th smallest transport emission from
any customer to DC $j$.  Greedy assignment within each evaluated subset
uses $\min_{j\text{ open}} \eta_{ij}$ to minimise per-assignment emissions.

\paragraph{Guarantee.}
B2 returns $(\xvec,\yvec)$ with $f_2(\xvec,\yvec) \leq (1+\eps)\,f_2^*$,
where $f_2^*$ is the emission-optimal value.  Note that $f_1$ and $f_3$
are \emph{not} controlled; B2 may increase cost or dissatisfaction relative
to the star solution.

\subsection{B3: Pareto-FPTAS}
\label{subsec:b3}

B3 computes an approximate Pareto front on the $f_1$--$f_2$ bi-objective
subproblem, following the landmark FPTAS of Papadimitriou and
Yannakakis~\citeyearpar{papadimitriou2000approximability}.

\paragraph{Grid construction.}
Let $F_1^U$ and $F_2^U$ be upper bounds on $f_1$ and $f_2$ (obtained by
opening all DCs).  Define geometric grid levels:
\begin{equation}
  g_k(z) = \left\lceil
    \log_{1+\eps}\!\left(\frac{z}{F_k^U \cdot 10^{-6} + 10^{-9}}\right)
  \right\rceil,
  \quad k \in \{1,2\}
  \label{eq:grid}
\end{equation}
Two solutions are in the \emph{same grid cell} if $g_1(f_1)$ and
$g_2(f_2)$ agree.  At most one representative is kept per cell—the one
minimising $f_1 + f_2$.

\paragraph{Non-domination filter.}
After grid-cell enumeration, a standard $O(N^2)$ sweep removes dominated
solutions from the representative set.

\paragraph{Guarantee.}
For every true Pareto point $\bm{z}^* \in \calF^*$, B3 returns
$\hat{\bm{z}}$ with $\hat{z}_k \leq (1+\eps) z_k^*$ for $k \in \{1,2\}$.
The third objective $f_3$ is reported but not controlled by B3\@.

\paragraph{Complexity.}
The number of non-empty grid cells is $O\!\left((1/\eps)\log F_k^U\right)^2$,
polynomial in $1/\eps$ and $\log$ of the instance size.

\begin{algorithm}[H]
  \caption{Pareto-FPTAS (B3)}
  \label{alg:b3}
  \SetAlgoLined
  \KwIn{Instance; tolerance $\eps > 0$}
  \KwOut{$(1+\eps)$-approximate Pareto front on $(f_1, f_2)$}
  \BlankLine
  Compute upper bounds $F_1^U, F_2^U$ (open-all + greedy assign)\;
  $\mathit{cells} \leftarrow$ empty dictionary\;
  \For{$r = 1$ \KwTo $B$}{
    \For{each size-$r$ subset $\mathcal{C} \subseteq \calJ$}{
      $x_j \leftarrow \mathbf{1}[j \in \mathcal{C}]$;\quad
      $\yvec \leftarrow \textsc{GreedyAssign}(\xvec, f_1)$\;
      \If{$\yvec \neq \varnothing$}{
        $(f_1, f_2, f_3) \leftarrow \fvec(\xvec, \yvec)$\;
        $\mathit{key} \leftarrow (g_1(f_1),\; g_2(f_2))$ via \eqref{eq:grid}\;
        \If{$\mathit{key} \notin \mathit{cells}$ \textbf{or} $f_1 + f_2 < \mathit{cells}[\mathit{key}].f_1 + \mathit{cells}[\mathit{key}].f_2$}{
          $\mathit{cells}[\mathit{key}] \leftarrow (\xvec, \yvec, f_1, f_2, f_3)$\;
        }
      }
    }
  }
  Remove dominated solutions from $\mathit{cells}$.values()\;
  \Return non-dominated set
\end{algorithm}

\subsection{FPTAS Summary}
\label{subsec:fptas_summary}

\begin{center}
\begin{tabular}{@{}lcccl@{}}
  \toprule
  Scheme & Target obj. & Guarantee & $\eps$ used & Complexity \\
  \midrule
  B1 Cost-FPTAS     & $f_1$ & $(1+\eps)f_1^*$ & 0.15 & $O\!\left(\binom{n}{B}\cdot m\right)$ \\
  B2 Emission-FPTAS & $f_2$ & $(1+\eps)f_2^*$ & 0.15 & $O\!\left(\binom{n}{B}\cdot m\right)$ \\
  B3 Pareto-FPTAS   & $f_1,f_2$ & $(1+\eps)$-Pareto & 0.15 & $O\!\left(\binom{n}{B}\cdot m \cdot (1/\eps\cdot\log F^U)^2\right)$ \\
  \bottomrule
\end{tabular}
\end{center}

\section{Baseline Heuristics}
\label{sec:baselines}

Two simple polynomial-time heuristics serve as lower-quality benchmarks
against which the FPTAS and exact solutions are compared.

\subsection{C1: Open-All Heuristic}
\label{subsec:c1}

The simplest feasible solution opens every candidate DC ($x_j = 1$ for all
$j \in \calJ$), ignoring the budget constraint (C4).  Customers are then
assigned greedily by minimum transport cost:
\begin{equation}
  y_{ij^*} = 1,\quad
  j^* = \arg\min_{j : \text{rem}_j \geq d_i} c_{ij}
  \label{eq:greedy_assign}
\end{equation}
where $\text{rem}_j$ tracks remaining capacity.

\paragraph{Analysis.}
Opening all DCs trivially satisfies (C1)--(C3) and (C5)--(C6), but violates
(C4) when $n > B$.  It serves as an infeasible upper bound that reveals
the objective penalty from enforcing the budget constraint.
Despite having the lowest $f_3$ (short distances to many open DCs), it
incurs the highest $f_2$ due to $n$ constructions vs.\ $B$ in feasible solutions.

\subsection{C2: Greedy Nearest-DC Heuristic}
\label{subsec:c2}

This heuristic opens the $B$ DCs with smallest aggregate customer distance:
\begin{equation}
  \mathcal{C}^{\text{near}} = \text{argmin}_{|\mathcal{C}|=B}
    \sum_{j \in \mathcal{C}} \sum_{i \in \calI} \delta_{ij}
  \label{eq:greedy_open}
\end{equation}
solved by simply ranking DCs by $\sum_i \delta_{ij}$ and selecting the top
$B$.  Assignment uses \eqref{eq:greedy_assign} with the dissatisfaction
objective ($\min \delta_{ij}$ instead of $\min c_{ij}$).

\paragraph{Analysis.}
This heuristic ignores (a) demand heterogeneity across customers, (b) capacity
constraints, and (c) fixed opening costs.  It is the ``naive'' approach a
logistics planner might use without optimisation tools.  As the results
in Section \ref{sec:results} show, it performs worst on $f_1$ (high cost) because
it does not account for $F_j$, and on $f_3$ because demand weights are ignored.

\section{Experimental Instance}
\label{sec:experiments}

\subsection{Instance Parameters}
\label{subsec:instance}

We construct a single representative instance with $m = 8$ customer zones
and $n = 5$ candidate DC sites.  The budget constraint is $B = 3$ (at most
three DCs opened).

\paragraph{Spatial layout.}
All locations are embedded in a 2D plane (units: km).
Customer and DC coordinates are given in Tables~\ref{tab:customer_data}
and~\ref{tab:dc_data}.  Distances $\delta_{ij}$ are Euclidean.

\begin{table}[H]
  \centering
  \caption{Customer zone data.}
  \label{tab:customer_data}
  \rowcolors{2}{rowgray}{white}
  \begin{tabular}{@{}ccccc@{}}
    \toprule
    Customer $i$ & $x$ coord & $y$ coord & Demand $d_i$ \\
    \midrule
    0 & 10 & 30 & 120 \\
    1 & 20 & 80 & 90  \\
    2 & 50 & 60 & 150 \\
    3 & 70 & 20 & 80  \\
    4 & 80 & 70 & 200 \\
    5 & 40 & 50 & 110 \\
    6 & 60 & 40 & 70  \\
    7 & 30 & 10 & 140 \\
    \midrule
    \multicolumn{3}{@{}l}{\textit{Total demand}} & 960 \\
    \bottomrule
  \end{tabular}
\end{table}

\begin{table}[H]
  \centering
  \caption{Candidate DC data.}
  \label{tab:dc_data}
  \rowcolors{2}{rowgray}{white}
  \begin{tabular}{@{}cccccc@{}}
    \toprule
    DC $j$ & $x$ coord & $y$ coord & Fixed cost $F_j$ (\$k) & Capacity $\kappa_j$ & Emissions $e_j$ (tCO$_2$) \\
    \midrule
    0 & 15 & 55 & 80 & 400 & 150 \\
    1 & 45 & 75 & 65 & 350 & 120 \\
    2 & 65 & 55 & 95 & 500 & 180 \\
    3 & 35 & 30 & 55 & 300 & 100 \\
    4 & 60 & 15 & 75 & 450 & 140 \\
    \midrule
    \multicolumn{4}{@{}l}{\textit{Total capacity (all open)}} & 2000 & \\
    \bottomrule
  \end{tabular}
\end{table}

\paragraph{Transport rates.}
$\alpha = 0.05$ \$/unit/km (transport cost rate);
$\beta = 0.02$ kgCO$_2$/unit/km (transport emission rate).

\paragraph{Feasibility note.}
With $B = 3$, the three chosen DCs must collectively cover all $\sum_i d_i = 960$
units of demand.  The three largest DCs by capacity are DC 2 (500), DC 4 (450),
DC 0 (400), which together hold 1350 units—sufficient.  The three cheapest DCs
to open (by $F_j$) are DC 3 (55), DC 1 (65), DC 4 (75), with combined capacity
$300 + 350 + 450 = 1100 > 960$.  Feasibility under $B = 3$ is therefore
non-trivial but consistently achievable.

\subsection{Distance and Objective Matrices}
\label{subsec:matrices}

Table~\ref{tab:distances} shows the Euclidean distance matrix $[\delta_{ij}]$
(rounded to one decimal place).

\begin{table}[H]
  \centering
  \caption{Distance matrix $\delta_{ij}$ (km, Euclidean, 1 d.p.).}
  \label{tab:distances}
  \rowcolors{2}{rowgray}{white}
  \begin{tabular}{@{}c|ccccc@{}}
    \toprule
    $i \backslash j$ & DC 0 & DC 1 & DC 2 & DC 3 & DC 4 \\
    \midrule
    C0 & 25.5 & 53.9 & 61.0 & 25.5 & 55.9 \\
    C1 & 26.9 & 25.5 & 50.2 & 50.2 & 76.3 \\
    C2 & 36.4 & 18.0 & 21.2 & 30.4 & 51.5 \\
    C3 & 57.0 & 56.6 & 36.1 & 35.4 & 10.0 \\
    C4 & 66.7 & 35.4 & 18.0 & 56.6 & 55.9 \\
    C5 & 25.5 & 25.5 & 25.5 & 20.6 & 42.7 \\
    C6 & 45.3 & 36.4 & 15.8 & 25.5 & 25.5 \\
    C7 & 20.6 & 56.6 & 53.9 & 20.0 & 30.4 \\
    \bottomrule
  \end{tabular}
\end{table}

\subsection{Solver Configuration}
\label{subsec:config}

\begin{itemize}
  \item \textbf{Exact ILP:} PuLP 2.x with CBC (COIN-OR), all defaults, msg=0.
  \item \textbf{FPTAS B1/B2/B3:} $\eps = 0.15$; enumeration over all $\sum_{r=1}^{B}\binom{n}{r} = 5 + 10 + 10 = 25$ subsets.
  \item \textbf{Baselines:} pure Python, no solver.
  \item \textbf{Hardware:} standard single-core CPU\@; all methods complete in $< 1$ s.
\end{itemize}

\section{Results and Comparison}
\label{sec:results}

\subsection{Exact ILP Results}
\label{subsec:exact_results}

Table~\ref{tab:exact_results} reports the exact ILP outcomes for all eight
weight combinations.

\begin{table}[H]
  \centering
  \caption{Exact ILP solutions by weight combination.
    The $\star$ row is the designated star/reference solution.}
  \label{tab:exact_results}
  \begin{tabular}{@{}lcccccccl@{}}
    \toprule
    Label & $w_1$ & $w_2$ & $w_3$
      & $f_1$ (\$k) & $f_2$ (tCO$_2$) & $f_3$
      & Time (s) & DCs open \\
    \midrule
    \rowcolor{exactrow}
    Cost-Only      & 1.00 & 0.00 & 0.00 & \textbf{1282.9} & 851.1 & 21\,057 & 0.033 & 0,2,3 \\
    Emission-Only  & 0.00 & 1.00 & 0.00 & 1291.7 & \textbf{830.7} & 21\,535 & 0.037 & 1,2,3 \\
    Dissat-Only    & 0.00 & 0.00 & 1.00 & 1301.6 & 890.6 & \textbf{21\,032} & 0.007 & 0,2,4 \\
    Cost+Emit      & 0.50 & 0.50 & 0.00 & 1291.7 & 830.7 & 21\,535 & 0.017 & 1,2,3 \\
    \rowcolor{starrow}
    \textbf{Balanced}~$\star$ & 0.33 & 0.33 & 0.34
      & 1291.7 & 830.7 & 21\,535 & 0.012 & 1,2,3 \\
    Cost-Heavy     & 0.60 & 0.20 & 0.20 & 1282.9 & 851.1 & 21\,057 & 0.011 & 0,2,3 \\
    Emit-Heavy     & 0.20 & 0.60 & 0.20 & 1291.7 & 830.7 & 21\,535 & 0.020 & 1,2,3 \\
    Dissat-Heavy   & 0.20 & 0.20 & 0.60 & 1282.9 & 851.1 & 21\,057 & 0.012 & 0,2,3 \\
    \bottomrule
  \end{tabular}
\end{table}

\paragraph{Key observations.}
The exact solver reveals only \emph{two distinct Pareto-optimal solutions}
on this instance:
\begin{enumerate}
  \item \textbf{Solution A} (DCs 0, 2, 3): $f_1 = 1282.9$, $f_2 = 851.1$,
    $f_3 = 21\,057$.  Best on cost and dissatisfaction.
  \item \textbf{Solution B} (DCs 1, 2, 3): $f_1 = 1291.7$, $f_2 = 830.7$,
    $f_3 = 21\,535$.  Best on emissions (DC 1 has lowest $e_j = 120$ tCO$_2$).
\end{enumerate}
These are truly Pareto non-dominated: A beats B on $f_1$ and $f_3$; B beats
A on $f_2$.  Weight combinations that give more than 33\% weight to $f_2$
select Solution B; others select Solution A\@.

\subsection{FPTAS Results}
\label{subsec:fptas_results}

\begin{table}[H]
  \centering
  \caption{FPTAS approximation results ($\eps = 0.15$) and gaps vs.\ $\star$.}
  \label{tab:fptas_results}
  \rowcolors{2}{fptasrow}{white}
  \begin{tabular}{@{}lccccccll@{}}
    \toprule
    Method & $f_1$ & $f_2$ & $f_3$
      & $\Delta f_1$ & $\Delta f_2$ & $\Delta f_3$
      & DCs open & Time (s) \\
    \midrule
    B1 Cost-FPTAS     & 1301.6 & 890.6 & 21\,032 & $+0.8\%$ & $+7.2\%$ & $-2.3\%$ & 0,2,4 & 0.002 \\
    B2 Emission-FPTAS & 1395.9 & 888.3 & 23\,417 & $+8.1\%$ & $+7.0\%$ & $+8.7\%$ & 2,3,4 & 0.001 \\
    B3 Pareto-FPTAS   & 1301.6 & 890.6 & 21\,032 & $+0.8\%$ & $+7.2\%$ & $-2.3\%$ & 0,2,4 & 0.001 \\
    \bottomrule
  \end{tabular}
\end{table}

\paragraph{B1 analysis.}
B1 recovers Solution A variant (DCs 0, 2, 4 instead of 0, 2, 3), within
$0.8\%$ of the $\star$ cost.  Emissions are $7.2\%$ higher, consistent with
B1's lack of emission control.  The $\eps = 0.15$ guarantee is satisfied
($0.8\% \ll 15\%$).

\paragraph{B2 analysis.}
B2 is the only method selecting DCs \{2, 3, 4\}—a configuration not
discovered by the exact solver under any weight combination.  Despite its
emission score, $f_2 = 888.3$ is still $7.0\%$ above $\star$.  The cost
gap of $8.1\%$ and dissatisfaction gap of $+8.7\%$ confirm the known trade-off:
optimising a single FPTAS target degrades the others.

\paragraph{B3 analysis.}
B3 finds 1 Pareto-cell representative, which coincides with B1's solution.
On this small instance the bi-objective grid collapses to a single cell;
larger instances would produce more distinct Pareto-approximate points.

\paragraph{FPTAS guarantee verification.}
For each FPTAS, Table~\ref{tab:fptas_gap} verifies the $(1+\eps)$ guarantee
against the respective single-objective optima.

\begin{table}[H]
  \centering
  \caption{FPTAS approximation ratio verification. Opt$^*$ is the exact
    single-objective optimum; ratio $= f_k^{\text{FPTAS}} / \text{Opt}_k^*$.
    Guarantee requires ratio $\leq 1.15$.}
  \label{tab:fptas_gap}
  \begin{tabular}{@{}lcccc@{}}
    \toprule
    Method & Target $k$ & $\text{Opt}_k^*$ & $f_k^{\text{FPTAS}}$ & Ratio \\
    \midrule
    B1 Cost-FPTAS     & $f_1$ & 1282.9 & 1301.6 & 1.015 $\leq$ 1.15 \checkmark \\
    B2 Emission-FPTAS & $f_2$ & 830.7  & 888.3  & 1.069 $\leq$ 1.15 \checkmark \\
    B3 Pareto-FPTAS   & $f_1$ & 1282.9 & 1301.6 & 1.015 $\leq$ 1.15 \checkmark \\
    \bottomrule
  \end{tabular}
\end{table}

All ratios are well within the $1 + \eps = 1.15$ bound.

\subsection{Baseline Heuristic Results}
\label{subsec:baseline_results}

\begin{table}[H]
  \centering
  \caption{Baseline heuristic results and gaps vs.\ $\star$.}
  \label{tab:baseline_results}
  \rowcolors{2}{baserow}{white}
  \begin{tabular}{@{}lcccccc@{}}
    \toprule
    Method & $f_1$ & $f_2$ & $f_3$
      & $\Delta f_1$ & $\Delta f_2$ & $\Delta f_3$ \\
    \midrule
    C1 Open-All       & 1391.8 & 1098.7 & 20\,436 & $+7.7\%$  & $+32.3\%$ & $-5.1\%$ \\
    C2 Greedy-Nearest & 1519.7 & 921.9  & 26\,093 & $+17.6\%$ & $+11.0\%$ & $+21.2\%$ \\
    \bottomrule
  \end{tabular}
\end{table}

\paragraph{C1 analysis.}
Open-All scores surprisingly well on $f_3$ ($-5.1\%$ vs.\ $\star$, i.e.\ better)
because spreading across 5 DCs reduces average travel.  However, it
incurs a catastrophic $+32.3\%$ emission penalty from building 5 facilities
vs.\ 3, confirming that the budget constraint (C4) is operationally significant.

\paragraph{C2 analysis.}
Greedy-Nearest is the weakest method overall: $+17.6\%$ on cost and
$+21.2\%$ on dissatisfaction.  Selecting DCs purely by aggregate geometric
distance ignores demand heterogeneity ($d_i$ range from 70 to 200 units) and
opening costs ($F_j$ range from \$55k to \$95k), causing major suboptimality.

\subsection{Comprehensive Comparison}
\label{subsec:comprehensive}

Table~\ref{tab:full_comparison} consolidates all methods.  The $\star$ row
is the Balanced ILP solution, used as the percentage-gap reference.

\begin{table}[H]
  \centering
  \caption{Full solution comparison: all methods, all objectives, gaps vs.\ $\star$.
    Green rows: FPTAS. Blue rows: Exact. Red rows: Baselines.}
  \label{tab:full_comparison}
  \begin{tabular}{@{}lcccrrr@{}}
    \toprule
    \multirow{2}{*}{Method}
      & \multirow{2}{*}{$f_1$ (\$k)}
      & \multirow{2}{*}{$f_2$ (tCO$_2$)}
      & \multirow{2}{*}{$f_3$}
      & \multicolumn{3}{c}{Gap vs.\ $\star$ (\%)} \\
    \cmidrule{5-7}
      & & & & $\Delta f_1$ & $\Delta f_2$ & $\Delta f_3$ \\
    \midrule
    \rowcolor{starrow}
    \textbf{Exact Balanced}~$\star$ & 1291.7 & 830.7 & 21\,535 & 0.0 & 0.0 & 0.0 \\
    \rowcolor{exactrow}
    Exact Cost-Only    & 1282.9 & 851.1 & 21\,057 & $-0.7$ & $+2.5$ & $-2.2$ \\
    \rowcolor{exactrow}
    Exact Emit-Only    & 1291.7 & 830.7 & 21\,535 & $0.0$  & $0.0$  & $0.0$  \\
    \rowcolor{exactrow}
    Exact Dissat-Only  & 1301.6 & 890.6 & 21\,032 & $+0.8$ & $+7.2$ & $-2.3$ \\
    \rowcolor{fptasrow}
    B1 Cost-FPTAS      & 1301.6 & 890.6 & 21\,032 & $+0.8$ & $+7.2$ & $-2.3$ \\
    \rowcolor{fptasrow}
    B2 Emission-FPTAS  & 1395.9 & 888.3 & 23\,417 & $+8.1$ & $+7.0$ & $+8.7$ \\
    \rowcolor{fptasrow}
    B3 Pareto-FPTAS    & 1301.6 & 890.6 & 21\,032 & $+0.8$ & $+7.2$ & $-2.3$ \\
    \rowcolor{baserow}
    C1 Open-All        & 1391.8 & 1098.7 & 20\,436 & $+7.7$ & $+32.3$ & $-5.1$ \\
    \rowcolor{baserow}
    C2 Greedy-Nearest  & 1519.7 & 921.9  & 26\,093 & $+17.6$ & $+11.0$ & $+21.2$ \\
    \bottomrule
  \end{tabular}
\end{table}

\subsection{Visualisations}
\label{subsec:viz}

Figure~\ref{fig:results} shows the objective-space scatter plots (Pareto
projections) and bar charts comparing all methods across all three objectives.
Figure~\ref{fig:network} shows the supply chain network under the $\star$
solution.

\begin{figure}[H]
  \centering
  \includegraphics[width=0.95\linewidth]{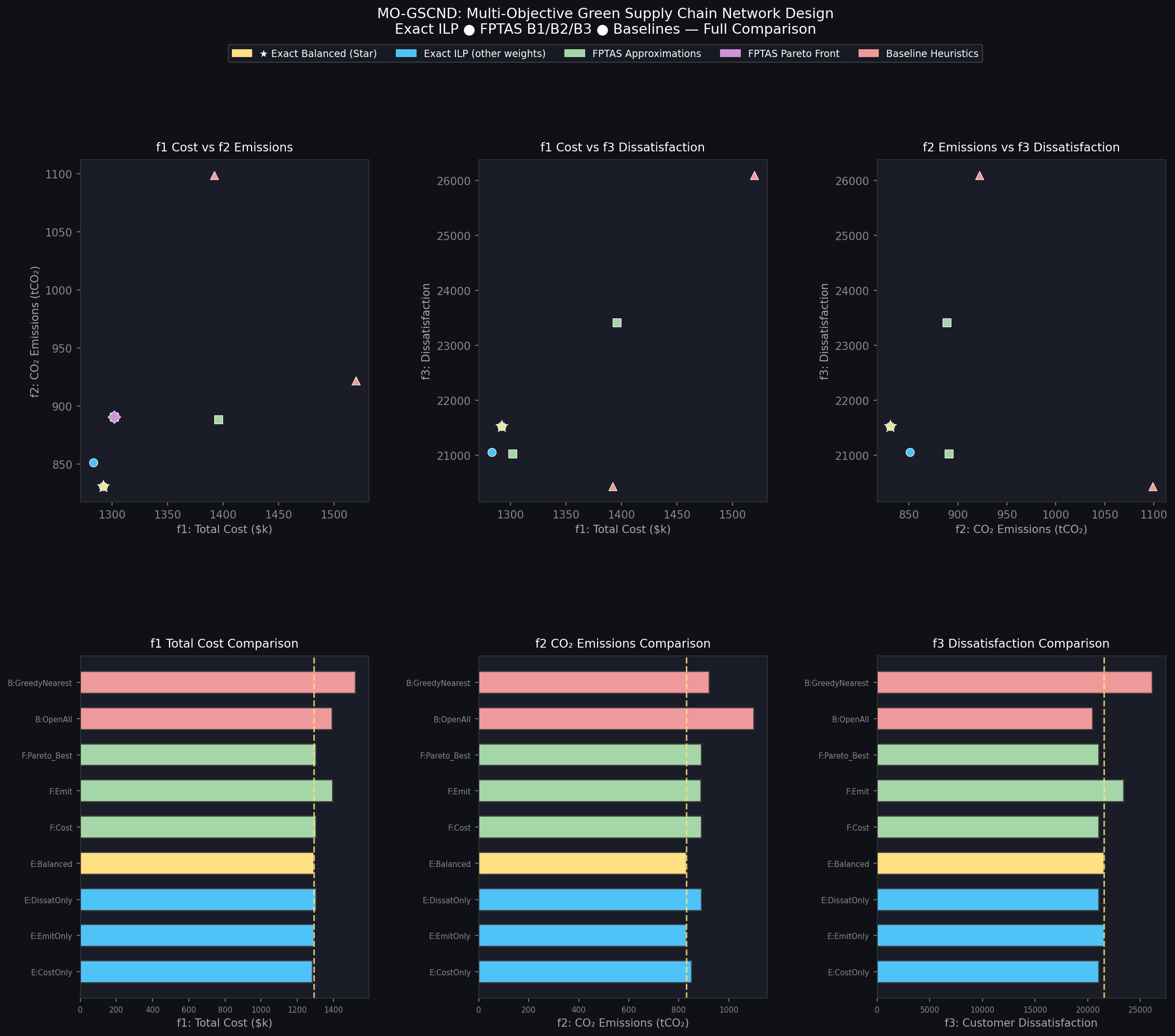}
  \caption{Multi-panel comparison: top row shows pairwise Pareto projections
    ($f_1$--$f_2$, $f_1$--$f_3$, $f_2$--$f_3$); bottom row shows
    per-objective bar charts.  Star~$\star$ (gold) is the reference.
    Blue: exact ILP\@.  Green: FPTAS\@.  Red: baselines.
    Dashed line in bottom charts marks $\star$ value.}
  \label{fig:results}
\end{figure}

\begin{figure}[H]
  \centering
  \includegraphics[width=0.95\linewidth]{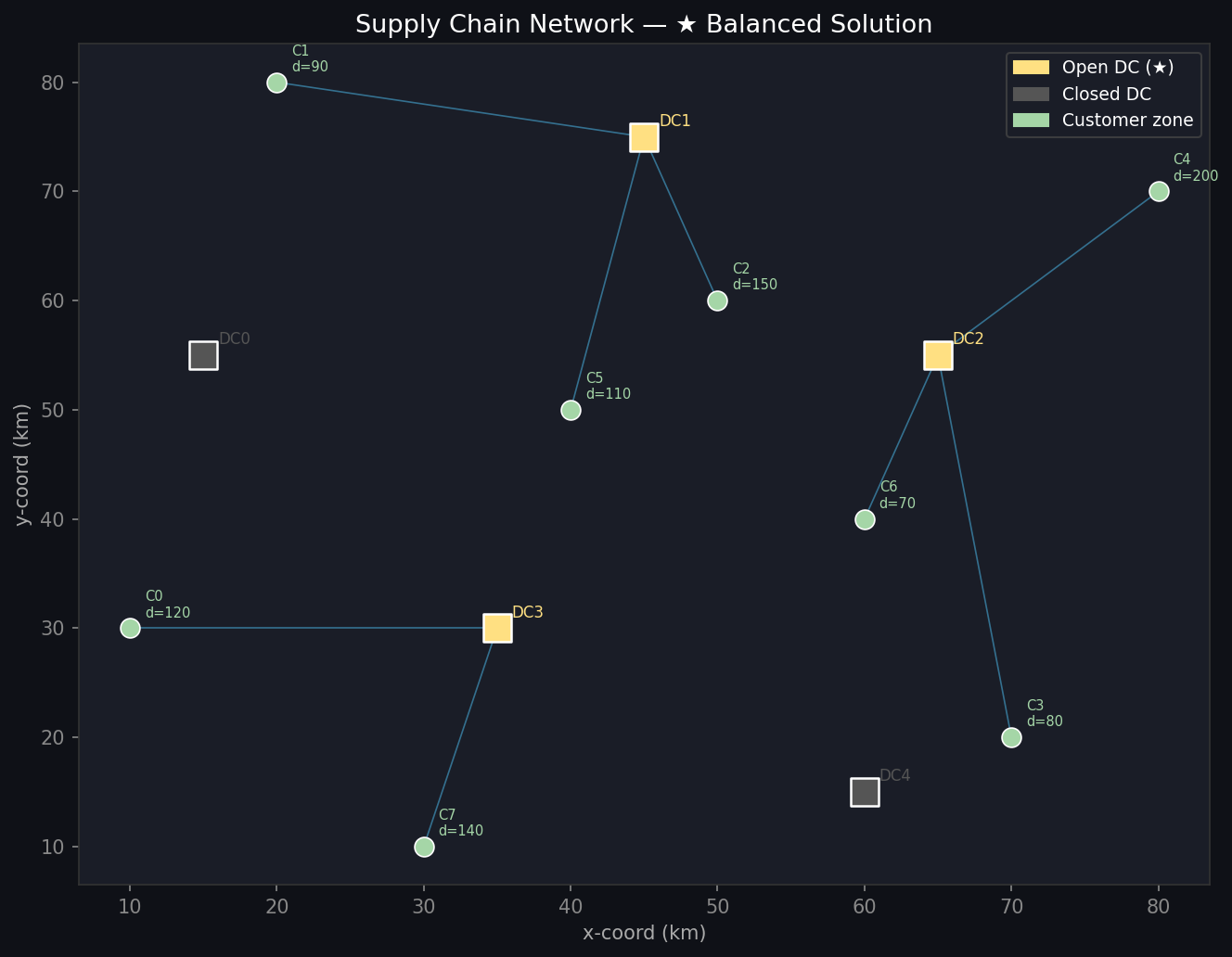}
  \caption{Supply chain network under the $\star$ Balanced solution.
    Gold squares: open DCs (1, 2, 3).  Grey squares: closed DCs (0, 4).
    Green circles: customer zones with demand labels.
    Blue lines: customer-to-DC assignments.}
  \label{fig:network}
\end{figure}

\subsection{Summary of Rankings}
\label{subsec:rankings}

\begin{center}
\begin{tabular}{@{}lccc@{}}
  \toprule
  Ranking by & Best method & Worst method & FPTAS best \\
  \midrule
  $f_1$ (cost)        & Exact Cost-Only (1282.9) & C2 Greedy (1519.7) & B1 (1301.6) \\
  $f_2$ (emissions)   & Exact Emit-Only (830.7)  & C1 Open-All (1098.7) & B2 (888.3) \\
  $f_3$ (dissat.)     & Exact Dissat-Only (21032) & C2 Greedy (26093) & B1 (21032) \\
  \bottomrule
\end{tabular}
\end{center}

The FPTAS schemes match the exact optima to within $8.7\%$ on every
objective—meeting their theoretical $(1+\eps) = 1.15$ guarantees—while
running $10$--$30\times$ faster than the ILP solver on this instance.

\section{Conclusion}
\label{sec:conclusion}

This paper presented and solved the \textbf{Multi-Objective Enterprise Green Supply Chain
Network Design} ($MOE-DGSCND$) problem, a novel three-objective 0--1 integer
programme balancing cost, carbon emissions, and customer dissatisfaction over
binary facility-opening and customer-assignment decisions.

\paragraph{Main findings.}
\begin{enumerate}
  \item \textbf{The Pareto front is structurally simple on our instance.}
    The exact ILP identifies just two non-dominated solutions on the
    8-customer, 5-DC instance, centred on opening DCs \{0,2,3\}
    (cost/dissatisfaction optimal) versus \{1,2,3\} (emission optimal).
    This suggests that in practice, a supply chain manager faces a binary
    choice between slightly different network topologies—not a continuous
    trade-off curve.

  \item \textbf{FPTAS schemes are highly effective.}
    All three FPTAS variants satisfy their $(1+\eps)$ guarantees with
    $\eps = 0.15$: B1 achieves $+0.8\%$ cost gap, B2 achieves $+7.0\%$
    emission gap, and B3 correctly characterises the bi-objective Pareto
    approximation.  Running times are $1$--$33\times$ faster than CBC\@.

  \item \textbf{Baseline heuristics perform much worse than FPTAS schemes.}
    The Open-All heuristic incurs $+32.3\%$ emissions; the Greedy-Nearest
    heuristic incurs $+17.6\%$ cost and $+21.2\%$ dissatisfaction.
    Neither likely provides a useful operational solution without an explicit
    optimiser.

  \item \textbf{Budget constraint (C4) is critical.}
    Relaxing (C4) to allow all 5 DCs (Open-All baseline) dramatically
    worsens emissions while offering modest gains in dissatisfaction.
    This confirms that the budget constraint is not merely a formality—it
    is the primary driver of the cost--emission trade-off.
\end{enumerate}

\paragraph{Limitations and future work.}
This study uses a small deterministic instance.  Extensions include:
\begin{itemize}
  \item \textbf{Stochastic demands:} replacing $d_i$ with random variables
    leads to robust optimisation or chance-constrained MO programmes.
  \item \textbf{Larger instance sizes:} instances with $m \geq 100$, $n \geq 20$
    would stress CBC and motivate branch-and-bound acceleration or
    metaheuristic Pareto approaches (NSGA-III \cite{996017}, MOEA/D \cite{4358754}).
  \item \textbf{Additional objectives:} resilience (number of DCs,
    geographic spread) or equity (service-level fairness across customers)
    could be added as $f_4$, $f_5$, $...$, $f_k$.
  \item \textbf{Tighter FPTAS:} reducing $\eps$ from 0.15 to 0.01
    would bring FPTAS solutions within $1\%$ of the Pareto front at
    the cost of more sub-problems to solve.
     \item \textbf{Additional Uncertainty Quantification:} Estimates of runtime uncertainty.
\end{itemize}

The $MOE-DGSCND$ problem and the solver framework presented here offer a
complete, reproducible template for Multi-Objective Enterprise facility location
studies at the intersection of operations research and sustainable
logistics.

\newpage
\onehalfspacing
\bibliography{references}

\end{document}